\documentclass[12pt,reqno]{amsart}

\usepackage{amsmath,amssymb,amsfonts}
\usepackage[frame,cmtip,arrow,matrix,line,graph,curve]{xy}
\usepackage{graphpap,color,paralist,pstricks}
\usepackage[mathscr]{eucal}
\usepackage[pdftex]{graphicx}
\usepackage[pdftex,colorlinks,backref=page,citecolor=blue]{hyperref}
\usepackage{tikz}
\usepackage{kotex}
\usepackage{array}
\usepackage{pgfplots}
\usepackage{mathrsfs}
\usetikzlibrary{arrows} 
\usepackage{tikz-cd}
\usepackage{mathtools}

\newcolumntype{P}[1]{>{\centering\arraybackslash}p{#1}}
\newcolumntype{M}[1]{>{\centering\arraybackslash}m{#1}}

\newtheorem{theorem}{Theorem}[section]
\newtheorem{proposition}[theorem]{Proposition}

\newtheorem{lemma}[theorem]{Lemma}

\theoremstyle{definition}

\newtheorem{remark}[theorem]{Remark}

\newtheorem{notation}[theorem]{Notation}

\newcommand{\PP}{\mathbb{P}}

\newcommand{\FF}{\mathbb{F}}

\newcommand{\cO}{\mathcal{O} }

\newcommand{\enm}[1]{\ensuremath{#1}}

\newcommand{\cal}[1]{\mathcal{#1}}

\renewcommand{\bar}[1]{\overline{#1}}

\newcommand{\Ii}{\enm{\cal{I}}}

\newcommand{\Oo}{\enm{\cal{O}}}

\newcommand\bH{\mathbf{H}}

\def\Sym{\mathrm{Sym} }

\def\Gr{\mathrm{Gr} }

\hypersetup{%
pdftitle={Rational quartic curves in the quintic del Pezzo threefold},%
pdfauthor={Kiryong Chung, Jeong-Seop Kim},%
pdfkeywords={},%
citecolor=blue,%
linkcolor=blue,%
}

\begin{document}

\title{A remark on rational quartic curves in prime Fano threefolds of degree $22$}
\date{\today}

\author{Kiryong Chung}
\address[Kiryong Chung]
{Department of Mathematics Education, Kyungpook National University, 80 Daehakro, Bukgu, Daegu 41566, Korea}
\email{krchung@knu.ac.kr}

\author{Jeong-Seop Kim}
\address[Jeong-Seop Kim]
{Department of Mathematics Education, Sunchon National University, 255 Jungang-ro, Suncheon-si, Jeollanam-do 57922, Korea}
\email{jeongseop@scnu.ac.kr}

\subjclass[2020]{14J45, 14E15, 14M15, 14C05.}
\keywords{Rational quartic curves, Prime Fano threefolds, Hilbert scheme}

\begin{abstract}
In this short note, using the Sarkisov link between a prime Fano threefold $V_{22}$ of degree~$22$ and the quintic del Pezzo threefold $V_5$, we prove that the Hilbert scheme of rational quartic curves in $V_{22}$ admits a generically $2$-to-$1$ rational map onto the projective space $\PP^4$.
\end{abstract}

\maketitle

\section{Introduction}
\subsection{Previous works}
The geometry of moduli spaces of rational curves in projective varieties has been extensively studied from various perspectives, including birational geometry and enumerative geometry.
Among these directions, the construction of compactifications and the investigation of their geometric invariants play a fundamental role in understanding the global structure of these moduli spaces.
In \cite{CK11, CHK12, CM17, CHL18, Chu22}, the authors focused on the birational relation among several compactifications of moduli spaces of rational curves.
Through a systematic study of various compactifications, the authors established explicit birational relations by extending the moduli-theoretic birational maps. On the other hand, in \cite{CKK25}, by finding torus-invariant quartic curves in the Mukai–Umemura threefold (\cite{MU83}), the authors proved the smoothness of the corresponding Hilbert scheme. These results subsequently enabled the application of various localization techniques to compute Donaldson--Thomas (DT) type invariants and verify several important conjectures.
For detailed descriptions of conjectures and their proof, see \cite{CMT18, CT21, KP08} and \cite{CLW24, CW26}.
Also, we obtained global descriptions of Hilbert schemes of rational curves for the quadric threefold and the quintic del Pezzo threefold (\cite{CHY23} and \cite{CKK25b}). Once such geometric descriptions are established, localization formulas (\cite{GP99}) can be canonically applied to compute the associated DT-type invariants. On the other hand, for prime Fano threefolds $V_{22}$ of degree $22$, including the Mukai–Umemura variety, the use of exceptional collections (and their mutations) of $V_{22}$ allowed us to identify a component of the Hilbert scheme with a Grassmannian variety (\cite{CL26}).
This approach depends heavily on the algebraic properties of rational curves in $V_{22}$. 
\subsection{Result}
In this short article, we propose a geometric property of the Hilbert scheme of rational quartic curves in $V_{22}$ inspired by so-called \emph{correlators} in quantum cohomology (\cite[Section 2.2]{BM04}).
Let $\bH_4(X)$ be the Hilbert scheme of curves $C$ in a smooth projective variety $X$ with Hilbert polynomial $\chi(\cO_C(m))=4m+1$.
Unless otherwise stated, we henceforth assume that $\bH_4(X)$ is irreducible to simplify the discussion.
Let $V_{22}\dashrightarrow V_5$ be \emph{the Sarkisov link} with centers: a line $L\subset V_{22}$ and a rational quintic curve $\Gamma\subset V_5$. For the definition and geometric property of the Sarkisov link, see Section \ref{susar} and \cite{KPS18}. 
It is well known that the strict transform $\bar{C}$ of a general rational quartic curve $C \subset V_{22}$, not meeting the line $L$, intersects $\Gamma$ in four points (cf. \cite[Section 1.3]{TZ12}).
We prove that such correspondence provides a covering structure of the Hilbert scheme $\bH_4(V_{22})$ over a projective space $\PP^4$.

\begin{theorem}[$=$ Theorem~\ref{double_cover}]\label{mainthm}
Let $\bH_4(V_{22})$ be the Hilbert scheme of rational quartic curves in $V_{22}$.
Then there exists a $2$-to-$1$ rational map
\[
\Phi:\bH_4(V_{22})\dashrightarrow \Sym^4\Gamma (\cong \PP^4)
\]
which associates a general rational quartic curve to four points in a rational quintic curve $\Gamma\subset V_5$.
\end{theorem}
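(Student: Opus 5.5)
The plan is to pass through the Sarkisov link of Section~\ref{susar} and reduce the statement to counting rational quartics in $V_5$ through four points of $\Gamma$. Let $\sigma\colon V_{22}\dashrightarrow V_5$ be the link centred at $L$ and $\Gamma$. I would first pin down the relation between hyperplane classes on the common resolution (blow-up of $L$, flop, contraction of the exceptional divisor onto $\Gamma$). I expect it to read
\[
H_{22}=3H_5-2E_\Gamma ,
\]
and I would check this against the standard description in \cite{KPS18}. Granting it, take a general rational quartic $C\subset V_{22}$ disjoint from $L$ and from the flopping curves. Its strict transform $\bar C$ meets $\Gamma$ in four points, as recalled in the introduction, so $4=3\deg\bar C-2\cdot 4$ gives $\deg\bar C=4$. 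Conversely, a smooth rational quartic $D\subset V_5$ meeting $\Gamma$ transversally in exactly four points, and avoiding the flopped curves, has strict transform of $H_{22}$-degree $12-8=4$ in $V_{22}$. Hence $\sigma$ induces a birational identification between $\bH_4(V_{22})$ and the locus
\[
\mathcal W=\{D\in\bH_4(V_5)\;:\;D\text{ is } 4\text{-secant to }\Gamma\},
\]
and I set $\Phi(C)=\bar C\cap\Gamma\in\Sym^4\Gamma\cong\PP^4$.

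Next comes a dimension check. We have $\dim\bH_4(V_5)=-K_{V_5}\cdot D=8$, and meeting the curve $\Gamma$ imposes one condition per point, so $\dim\mathcal W=8-4=4$. This matches $\dim\bH_4(V_{22})=4$. Requiring $D$ to pass through four prescribed points of $V_5$ imposes $2\cdot4=8$ conditions, so the fibre of $\mathcal W\to\Sym^4\Gamma$ over a general $\{p_1,\dots,p_4\}$ is expected to be finite. It equals the set of rational quartics in $V_5$ through $p_1,\dots,p_4$.

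So the theorem reduces to two claims.
\begin{enumerate}
\item[(a)] There are exactly two rational quartics through four general points of $\Gamma$.
\item[(b)] Both are smooth, meet $\Gamma$ transversally only at the $p_i$, and avoid the flopping locus.
\end{enumerate}
Claim (b) makes them correspond to genuine quartics in $V_{22}$, and it also gives dominance of $\Phi$.

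For (a) I would use the geometry of hyperplane sections. A rational quartic $D$ spans a $\PP^4$, and $S=V_5\cap\PP^4$ is a quintic del Pezzo surface in which $D$ has $D^2=2$. The relevant classes are $2l-e_i-e_j$ or $3l-2e_i-\sum_{k\ne i}e_k$, each moving in a $\PP^3$. The four points span a $\PP^3$, so the admissible $\PP^4$'s form a pencil. I would then count pairs ($\PP^4$ in the pencil, member of the linear system through the four points) on the resulting surface family. This count should match the genus-zero Gromov--Witten invariant of $V_5$ with four point insertions in degree $4$, which I expect to be $2$ and would confirm from the known small quantum cohomology of $V_5$ (\cite{BM04}).

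The main obstacle is the genericity transfer. Four general points \emph{on $\Gamma$} are not four general points of $V_5$, so enumerative generality must be justified. I would first try a monodromy/moving argument: $\mathrm{Aut}(V_5)=\mathrm{PGL}_2$ acts, and the incidence variety $\{(\Gamma,p_1,\dots,p_4)\}$ dominates $V_5^4$. Hence for general $V_{22}$ (general $\Gamma$) and general points on it, the configuration is general. I would then extend to every $V_{22}$ by degree constancy of the finite part of $\Phi$. The delicate points are excluding degenerate solutions, such as curves containing components of $\Gamma$ or flopping curves, and checking that there is no extra contact with $\Gamma$.
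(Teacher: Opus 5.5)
Your step (a) is false, and the proposal is missing the step that actually produces the number $2$. Through four general points of $\Gamma$ (or of $V_5$) there are exactly three rational quartics in $V_5$. Indeed, $\Lambda_D=\langle p_1,\dots,p_4\rangle\cong\PP^3$ meets $V_5$ in a fifth point $p_5$. For a quartic $C$ through the $p_i$, $\Lambda_D$ is a hyperplane of $\langle C\rangle\cong\PP^4$, and $\langle C\rangle\cap V_5=C\cup\ell$ with $\ell$ a line. Comparing $\Lambda_D\cap(C\cup\ell)=\{p_1,\dots,p_5\}$ with $\Lambda_D\cap C=\{p_1,\dots,p_4\}$ forces $p_5\in\ell$. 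Conversely, each of the three lines through $p_5$ yields a quartic (Lemma~\ref{three_quartics}). So the four-point degree-$4$ invariant is $3$ (the correlator of \cite{BM04}), not $2$. Your sketch of the count also misplaces the geometry: $V_5\cap\PP^4$ is the curve $C\cup\ell$, not a del Pezzo surface, and the $\PP^4$'s (or $\PP^5$'s) containing $\Lambda_D$ form a net, not a pencil. The paper gets from $3$ to $2$ as follows. Since $\Lambda_D\subset\langle\Gamma\rangle\cong\PP^5$, the point $p_5$ lies on $A=V_5\cap\langle\Gamma\rangle\in|H^*-\Gamma|$, the surface the link contracts onto $L$. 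Lemma~\ref{quartic_in_dPz} asserts that one of the three quartics lies in $A$; such a curve has no strict transform in $V_{22}$, so $\bH_4(V_{22})$ is birational to $\mathcal W\setminus\mathbf S_A$, not to $\mathcal W$. Your identification $\bH_4(V_{22})\sim\mathcal W$ silently assumes that no $4$-secant quartic lies in $A$. Under that assumption your ingredients, with the correct count, would give a $3$-to-$1$ map rather than the claimed $2$-to-$1$.

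That last point deserves attention, not a patch. A quartic through $D$ lies in $A$ exactly when its residual line through $p_5$ does. If $A$ is normal, it is a del Pezzo surface with du Val singularities and carries only finitely many lines. Then the quartics contained in $A$ form finitely many $\PP^3$'s (hyperplanes of $\langle A\rangle$ through a fixed line), their traces on $\Gamma$ fill at most a $3$-dimensional subset of $\Sym^4\Gamma\cong\PP^4$, and for general $D$ all three quartics lift. Normality looks like the typical case. If $N_L\cong\cO\oplus\cO(-1)$, then $D'\cong\FF_1$ and $(H'-2D')|_{D'}=-K_{\FF_1}$. The flop blows $D'$ up at the points where the lines meeting $L$ cross it (three of them, since $\deg A=8-3$). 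So $H^+|_{D^+}=-K_{D^+}$ and $A$ is an anticanonical model. Consistently, in Case~3 of Lemma~\ref{quartic_in_dPz} the cubics singular at $q_1$ through $q_2,q_3,q_4$ form a $\PP^3$ mapping injectively into $|\cO_{\PP^1}(4)|$ on $\Gamma_0$. Hence one of them passes through $\psi(x_1),\dots,\psi(x_4)$ only for $D$ in a hyperplane of $\Sym^4\Gamma$, and that case does not hold as stated. The discarding step does work when every point of $A$ lies on a line of $A$, as in the non-normal Cases~1--2 with normalization $\FF_1$ or $\FF_3$. So the decisive question, for the paper's argument and for any repair of yours, is whether a line of $A$ passes through $p_5=\rho(D)$ for general $D$; your step (a) does not touch it.
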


The outline of the proof of Theorem \ref{mainthm} is as follows.
For four general points in $V_5$, there exist three rational quartic curves (Lemma \ref{three_quartics}).
One of them is contained in the strict transform of the flopping center of the Sarkisov link, and thus the inverse image of $\Phi$ consists of two rational quartic curves in $V_{22}$.
Moreover, by analyzing the exceptional locus of a residual map, we can show that the rational map $\Phi$ in Theorem \ref{mainthm} is branched over a hypersurface in $\PP^4$. For a geometric meaning of the branch locus of the map $\Phi$, see the proof of Theorem~\ref{double_cover}.
\subsection*{Acknowledgements}
The authors gratefully acknowledge the many helpful suggestions and comments provided by Sanghyeon Lee.

\section{Preliminaries}
In this section, we review the results in \cite{CKK25b,FN89, Ili94} and \cite{IP99, KPS18}, which will be used for the proof of Theorem \ref{mainthm}.

\subsection{Rational quartic curves in $V_5$}
Let $V_5$ be the quintic del Pezzo threefold.
In \cite{CKK25b}, by extending the result of \cite[Proposition~4.11]{FGP19}, it is proved that the Hilbert scheme $\bH_4(V_5)$ of rational quartic curves in $V_5$ is isomorphic to a Grassmannian bundle over the Hilbert scheme $\bH_1(V_5)$ of lines in $V_5$.
More precisely, a rational quartic curve in $V_5$ arises as the residual curve of a linear section containing a line in $V_5$. In detail, let $l$ be a line in $V_5$, and let $\Lambda$ be a $4$-dimensional linear subspace of $\PP^6$ containing $l$.
Then $\Lambda\cap V_5=l\cup C$ for a rational quartic curve $C$ in $V_5$.
Moreover, such a residual line $l$ is uniquely determined by $C$.
Indeed, if $C$ admitted two distinct residual lines $l_1$ and $l_2$, then the linear span $\Lambda$ of $C$ in $\PP^6$ would contain both $l_1$ and $l_2$.
Consequently,
\[
 \Lambda \cap V_5\supseteq C\cup l_1\cup l_2
\]
would have degree at least $6$, contradicting the fact that $\deg(V_5\cap \Lambda)=5$.

Note that the family of $4$-dimensional linear subspaces containing a fixed line in $\PP^6$ is isomorphic to $\Gr(3,5)$, and $\bH_1(V_5)\cong \PP^2$. Hence $\bH_4(V_5)$ is isomorphic to a $\Gr(3,5)$-bundle over $\PP^2$ (\cite[Theorem 1.1]{CKK25b}).
A key technical step in the proof of this claim was to exclude the possibility that a non-Cohen--Macaulay curve $C$ with Hilbert polynomial $\chi(\cO_C(m))=4m+1$ is supported on an elliptic quartic curve (\cite[Section~4.2]{CKK25b}).
This was achieved by studying the geometry of surfaces arising as hyperplane sections of the quintic del Pezzo variety $V_5$.

\begin{proposition}[{\cite[Corollary 2.1]{FN89}, \cite[1.2.1]{Ili94}}]\label{recall}
There exists a curve $\Sigma$ and a surface $B$ containing $\Sigma$ on $V_5$ which satisfy the following.
\begin{align*}
\Sigma&=\{x\in V_5\mid \text{$\exists$ only one line $\ell$ in $V_5$ such that $x\in\ell$}\},\\
B\setminus \Sigma &=\{x\in V_5\mid \text{$\exists$ exactly two lines $\ell$ in $V_5$ such that $x\in\ell$}\}\\
V_5\setminus B&=\{x\in V_5\mid \text{$\exists$ exactly three lines $\ell$ in $V_5$ such that $x\in\ell$}\}
\end{align*}
Moreover, $B\in|-K_{V_5}|=|2H^*|$ where $H^*=\Oo_{V_5}(1)$.
\end{proposition}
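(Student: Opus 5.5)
The plan is to use the $\mathrm{SL}_2$-quasihomogeneous structure of $V_5$ together with the incidence correspondence of lines. Recall that $V_5$ is the closure of the $\mathrm{SL}_2$-orbit of the class of $xy(x^4-y^4)$ in $\PP(\Sym^6\mathbb{C}^2)\cong\PP^6$. I would use the standard orbit decomposition
\[
V_5 = O_3 \sqcup O_2 \sqcup O_1 .
\]
Here $O_1$ is the closed orbit, the rational normal sextic $\{[l^6]\}$, and $O_2=\{[l^5m]\}$ is a $2$-dimensional orbit whose closure is the tangent surface of that sextic. I will set $\Sigma:=O_1$ and $B:=\overline{O_2}=O_2\sqcup O_1$. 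Then $B$ is the tangent developable of a rational normal sextic, which has degree $2\cdot 6-2=10=2\deg V_5$. Since $\operatorname{Pic}V_5=\mathbb{Z}H^*$, this gives $B\in|2H^*|=|-K_{V_5}|$, which is the last assertion.

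For the line counts, let $U=\{(x,\ell): x\in\ell\}\subset V_5\times\bH_1(V_5)$ be the universal line. It is a $\PP^1$-bundle $\PP(\mathcal{E})$ over $\bH_1(V_5)\cong\PP^2$, where $\mathcal{E}$ is the restriction of the tautological rank-$2$ bundle, and $\psi:U\to V_5$ is the natural map. Both $U$ and $\psi$ are $\mathrm{SL}_2$-equivariant, with $\bH_1(V_5)\cong\PP(\Sym^2\mathbb{C}^2)$. Hence the number of lines through $x$ depends only on the orbit of $x$. The steps are as follows.

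First, $\psi$ is finite. A positive-dimensional family of lines through one point would sweep out a cone surface, so by equivariance it would happen for every point of an orbit. For $O_3$ this contradicts the finiteness of the general fibre, since both $U$ and $V_5$ have dimension $3$. For $O_1$ and $O_2$ I would check it directly at the representatives below.

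Second, $\deg\psi=3$. This follows from $\int_U\psi^*(H^*)^3=5\deg\psi$, computed via the Segre classes of $\mathcal{E}$. Alternatively, one can find directly that exactly three lines pass through $[xy(x^4-y^4)]$, namely those indexed by the three quadrics $xy$, $x^2-y^2$, $x^2+y^2$ whose roots partition the six roots.

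Third, I would count the lines through the representatives $[x^5y]\in O_2$ and $[x^6]\in O_1$. A line in $V_5$ is $\{[q\cdot f]\}$-type: for a quadric $q$ in $\PP(\Sym^2)$, the associated line is $\{l^4 q' \}$-style pencil. Concretely, I would write the lines as $\ell_q=\{\,[a^4\,(\text{pencil})]\,\}$ in the known explicit form and solve for which $q$ have $\ell_q\ni x$. For $O_2$ this should give exactly two lines, namely $q=x^2$ and $q=xy$. For $O_1$ it should give only $q=x^2$.

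The main obstacle is this last step. To carry it out, I first need to pin down the explicit parametrization of lines by binary quadrics. I then need to verify that the solution sets are reduced in count: exactly $2$ and exactly $1$, not merely nonempty. As a consistency check, I would compare with the ramification of $\psi$. The ramification divisor $R\in|K_U-\psi^*K_{V_5}|$ has image $B$, and it has degree $2$ onto $B$ generically. This matches the fibre of $\psi$ over $O_2$ consisting of one simple point and one double point. Total ramification over $\Sigma$ then gives a single line there.
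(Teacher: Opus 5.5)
The paper gives no proof of this proposition; it only quotes Furushima--Nakayama and Iliev. Your $\mathrm{SL}_2$-orbit strategy is the standard route, and your identifications $\Sigma=O_1$ and $B=\overline{O_2}$ are correct: $B$ is the tangent developable of the sextic, of degree $10$, hence $B\sim 2H^*$. Your predicted answers are also correct. The gap is your third step, which carries the entire content of the line counts. You never determine the lines, and the shape you sketch (a pencil of the form $l^4q'$, or $a^4\cdot(\text{pencil})$) is wrong for non-special lines. For $q=l^2$ the line is the tangent line $\ell_{l^2}=\langle l^6,\,l^5m\rangle$. For $q=lm$ with $l\neq m$, however, it is
\[
\ell_{lm}=\langle l^5m,\,lm^5\rangle=\{[\,lm(a\,l^4+b\,m^4)\,]\},
\]
whose general member has six distinct roots and no fourth-power factor. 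To justify this:
\begin{itemize}
\item The points of $\ell_{xy}$ with $ab\neq 0$ lie in the torus orbit of $[xy(x^4-y^4)]$, and its endpoints lie in $O_2$, so $\ell_{xy}\subset V_5$.
\item Any element stabilizing $\ell_{xy}$ must preserve the common roots $\{x,y\}$ of its members, so the stabilizer is the normalizer of the torus. The orbit of $\ell_{xy}$ is therefore $2$-dimensional, hence open and dense in $\bH_1(V_5)\cong\PP^2$.
\item There are no $\mathrm{SL}_2$-invariant lines, since $\Sym^6\mathbb{C}^2$ is irreducible. So any remaining line has a stabilizer containing a Borel subgroup, and the only Borel-stable lines are the tangent lines.
\end{itemize}

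With this in hand, the counts reduce to a root-multiplicity check. A member of $\ell_{lm}$ either has six distinct roots or equals $l^5m$ or $lm^5$, and every member of $\ell_{l^2}$ is divisible by $l^5$. Hence:
\begin{itemize}
\item $[x^6]$ lies only on $\ell_{x^2}$;
\item $[x^5y]$ lies exactly on $\ell_{x^2}$ and $\ell_{xy}$;
\item $[xy(x^4-y^4)]$ lies exactly on the three lines $\ell_{lm}$ with $\{l,m\}$ an antipodal pair of the octahedron. For adjacent vertices, the other four roots are not the zeros of any $al^4+bm^4$.
\end{itemize}
This also gives finiteness of $\psi$ and $\deg\psi=3$ directly, so your first two steps become unnecessary.

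Separately, your consistency check is wrong. The ramification divisor is the ruled surface $R\subset U$ of pairs $(x,\ell)$ with $\ell$ a tangent line of the sextic. The map $\psi|_R\colon R\to B$ is birational, not of degree $2$, because each point of $B\setminus\Sigma$ lies on exactly one tangent line. The double point in the fibre over $[x^5y]$ is $\ell_{x^2}$: along $[xy(x^4-ty^4)]$, the lines $\ell_{x^2\pm\sqrt{t}\,y^2}$ collide as $t\to 0$. It is $\psi^*B$ that contains $R$ with multiplicity $2$.
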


\subsection{Sarkisov Link of $V_5$ (\cite{IP99})}\label{susar}

The quintic del Pezzo threefold $V_5$ is birationally equivalent to the prime Fano threefold $V_{22}$ of degree $22$ via a Sarkisov link.
This link is realized by the double projection of $V_{22}$ from a line in $\PP^{13}$.
\[\xymatrix{
& E'\subseteq V'\supseteq D' \ar[ld]_f \ar[rd]^{\varphi'} \ar@{<-->}^{\text{$(H'-D')$-flop $\chi$}}[rr] &&
E^+\subseteq V^+\supseteq D^+ \ar[ld]_{\varphi^+} \ar[rd]^g
& \\
\PP^{13}\supseteq V_{22}\supseteq L \ar@{-->}[rr] &&
Y\subseteq\PP^{11} \ar@{-->}[rr] &&
\Gamma\subseteq V_5\subseteq\PP^6
}\]
Let $H=\Oo_{V_{22}}(1)$ and $H^*=\Oo_{V_5}(1)$.
We also denote by $H'=f^*H$ and $H^+=g^*H^*$.
Then
\[
\chi^{-1}(H^+)=H'-2D'
\quad\text{and}\quad
\chi(H)=3H^+-2E^+
\]
where $L$ is a line in $V_{22}$, $\Gamma$ is a smooth rational curve of degree $5$ in $V_5$, $D'$ is the $f$-exceptional divisor, and $D^+=\chi(D')$ satisfies
$D^+=H^+-E^+$ and $g(D^+)\in |H^*-\Gamma|$ (\cite[Remark~5.2.4]{KPS18}).
Moreover, $E^+$ is the $g$-exceptional divisor, and $E'=\chi^{-1}(E^+)$ satisfies
$E'=H'-3D'$ and $f(E')\in |H-3L|$ (\cite[Remark~5.2.4]{KPS18}).
Furthermore, $f(E')$ is the ruled surface swept out by conics meeting $L$ in $V_{22}$ (\cite[Remark~4.3.4]{IP99}).
The flopping locus in $V'$ is the union of the strict transforms of lines on $V_{22}$ intersecting $L$, together with the exceptional section of $D'$ when the line $L$ is special.
Similarly, the flopping locus in $V^+$ is the union of the strict transforms of bisecant lines of $\Gamma$ on $V_5$ (\cite[Lemma~5.2.5]{KPS18}).

\begin{notation}\label{defA}
We denote by $A$ the unique hyperplane section $A\in|H^*-\Gamma|$. In the notation of Section~2.2, this hyperplane section is given by $A=g(D^+)$.
\end{notation}

Note that the image of $A$ under the Sarkisov link ($f\circ \chi \circ g^{-1}$) is the line $L$ in $V_{22}$.

Let $C$ be a degree $d$ curve in $V_{22}$ which does not intersect either $L$ or the flopping locus of $\chi$, and we denote by $\overline{C}$ (resp. $C'$, $C^+$) the strict transform of $C$ on $V_5$ (resp. $V'$, $V^+$).
Then the degree of $\overline{C}$ on $V_5$ is given as follows.
\[
H^*.\overline{C}
=H^+.C^+
=\chi^{-1}(H^+).C'
=(H'-2D').C'
=H.C=d.
\]
Moreover, the image $\overline{C}$ intersects $\Gamma$ in $V_5$ at $d$ points since
\[
\Gamma.\overline{C}
=E^+.C^+
=\chi^{-1}(E^+).C'
=(H'-3D').C'
=H.C=d.
\]

\section{Rational quartic curves in $V_{22}$}

The Sarkisov link between $V_5$ and $V_{22}$ induces a relationship between certain families of rational curves in these varieties.
We first study the geometry of the quintic del Pezzo threefold $V_5$ arising from its Pl\"ucker embedding $V_5 \subset \PP^6$.
In this section, we will repeatedly denote by $p_1$, $p_2$, $p_3$, and $p_4$ four points of $V_5$, which are also points of a rational quintic curve $\Gamma$ in $V_5$. They correspond to a point of the fourth symmetric product of the curve $\Gamma$, and they span a linear subspace in $\PP^6$.
We denote by
\[
D=p_1+p_2+p_3+p_4\in \Sym^4\Gamma
\quad\text{and}\quad
\Lambda_D=\langle p_1,\,p_2,\,p_3,\,p_4\rangle\subseteq\PP^6.
\]
We say that the four points $p_1$, $p_2$, $p_3$, and $p_4$ are general if $D\in \Sym^4\Gamma$ is general and their linear span $\Lambda_D$ has dimension $3$.

\subsection{Lemmas}
We begin with several lemmas needed for the proof of Theorem~\ref{mainthm} ($=$ Theorem~\ref{double_cover}).

\begin{lemma}\label{incidence}
Let $\Gamma$ be a rational quintic curve in $V_5$, and $p_1$, $p_2$, $p_3$, and $p_4$ be four general points of $\Gamma$ in $V_5$.
Then the linear subspace $\Lambda_D$ spanned by these four points does not contain any line in $V_5$.
\end{lemma}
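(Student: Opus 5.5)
We argue by a dimension count on an incidence variety. Since $\Gamma\subset V_5\subset\PP^6$ is a smooth rational quintic, it is linearly normal in its span. Its span is a $\PP^5$: it is the hyperplane cutting out $A\in|H^*-\Gamma|$, and $\Gamma$ is not contained in a $\PP^4$ because a rational normal quintic spans $\PP^5$. Hence any $4$ distinct points of $\Gamma$ are linearly independent, so $\dim\Lambda_D=3$ for every reduced $D$. Suppose, for a contradiction, that for general $D\in\Sym^4\Gamma$ the space $\Lambda_D$ contains a line $\ell\subset V_5$. Consider
\[
I=\{(D,\ell)\in \Sym^4\Gamma\times \bH_1(V_5)\mid \ell\subset\Lambda_D\}.
\]
By assumption the first projection $I\to\Sym^4\Gamma$ is dominant, so $\dim I\ge 4$. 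Since $\bH_1(V_5)\cong\PP^2$, some line $\ell$ then has a fibre $I_\ell=\{D\mid \ell\subset\Lambda_D\}$ of dimension at least $2$.

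Fix such a line $\ell$ and bound $\dim I_\ell$. Since $\Lambda_D$ lies in the span $\PP^5$ of $\Gamma$, we may assume $\ell$ lies in this $\PP^5$, so $\ell\subset A$. Project from $\ell$: let $\pi_\ell:\PP^5\dashrightarrow\PP^3$. The condition $\ell\subset\Lambda_D$ with $\dim\Lambda_D=3$ means exactly that the four points $\pi_\ell(p_i)$ span only a line in $\PP^3$, so they are collinear. The curve $\pi_\ell(\Gamma)$ is nondegenerate in $\PP^3$ of degree $5-k$, where $k=\deg(\ell\cap\Gamma)$. Here $k\le 2$: otherwise $\ell$ would be a trisecant line of a rational normal quintic, which does not exist since any $3$ points of $\Gamma$ are independent. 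So the image curve has degree $3$, $4$, or $5$ (with possible multiple covering, which only lowers the degree). Its $4$-secant lines form a family of dimension at most $0$, and none occur unless the degree is at least $4$. The key input is the classical fact that a nondegenerate irreducible space curve has only a finite family of $4$-secant lines, since the expected dimension $4-2\cdot 4+4=0$, together with the trisecant lemma. Each such line meets the curve in finitely many points. We also have to handle the degenerate situation where a point $p_i$ lies on $\ell$ and so is not mapped by $\pi_\ell$; then we replace $p_i$ by the tangent direction. There are at most $2$ such points, each imposing a $0$-dimensional condition, so the same count applies. Hence $\dim I_\ell\le 0<2$, which is the required contradiction.

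The main obstacle is the claim that $\pi_\ell(\Gamma)$ has at most finitely many $4$-secant lines, uniformly over lines $\ell\subset A$. Here we would use that $\Gamma$ is a rational normal curve in $\PP^5$, so its projections from lines are rational curves in $\PP^3$ of degree $\le 5$ that are not contained in a plane. If some projection were a double cover onto a conic or a twisted cubic, then infinitely many $4$-secants could occur, and we would need to rule this out. We would try to show that $\pi_\ell|_\Gamma$ is birational by noting that a non-birational projection from $\ell$ forces infinitely many planes through $\ell$ meeting $\Gamma$ in $\ge 2$ further points, i.e. $\ell\cup\Gamma$ spans too little. Failing that, we would simply restrict to the at most finitely many bad lines $\ell$ (a proper closed subset of $\bH_1(V_5)$), whose $I_\ell$ has dimension $\le 3$; this still suffices, since the good lines contribute $\dim\le 2$ and the bad ones $\dim\le 3<4$.
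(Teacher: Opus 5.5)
Your strategy is the paper's: the incidence $I\subset\Sym^4\Gamma\times\bH_1(V_5)$, with the fibres $I_\ell$ bounded by projecting from $\ell$ and counting secant lines. Your remark that only lines $\ell\subset A$ matter, since $\Lambda_D\subset\langle\Gamma\rangle\cong\PP^5$, improves on the paper's projection to $\PP^4$. But your contradiction rests on the bound $\dim I_\ell\le 0$, and that bound is false, for two reasons.

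First, the ``classical fact'' is wrong. A nondegenerate irreducible curve in $\PP^3$ can have a one-dimensional family of $4$-secant lines. For example, on a smooth rational quintic of bidegree $(1,4)$ on a smooth quadric (a projection of the rational normal quintic from a line), every line of one ruling is $4$-secant. What is true, and what the paper uses, is that this family has dimension at most $1$. Otherwise a general secant would be a trisecant, contradicting the trisecant lemma.

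Second, your treatment of points of $\Gamma$ on $\ell$ is wrong.
If $\ell\cap\Gamma=\{p_1\}$, then $p_1+p_2+p_3+p_4\in I_\ell$ if and only if $\pi_\ell(p_2),\pi_\ell(p_3),\pi_\ell(p_4)$ are collinear; no tangent direction enters. So these points lie on a trisecant of the rational quartic $\pi_\ell(\Gamma)$, such trisecants form a one-dimensional family, and $\dim I_\ell=1$.
If $\ell$ is a bisecant of $\Gamma$ through $p_1,p_2$, then $\ell=\langle p_1,p_2\rangle\subset\Lambda_D$ for every $D=p_1+p_2+x+y$, so $\dim I_\ell=2$.
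Such lines exist: they form the flopping locus on the $V_5$ side. For instance, if $A$ is smooth and $\overline{\Gamma}\sim 2h-e_1$ as in Case~3 of Lemma~\ref{quartic_in_dPz}, the three lines $h-e_i-e_j$ with $i,j\neq 1$ are bisecant. So your step ``some $\ell$ has $\dim I_\ell\ge 2$, contradiction'' yields nothing, because a bisecant line has exactly that property.

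The repair is the stratification you sketched at the end, but with the right exceptional set.
\begin{itemize}
\item Lines in $A$ form a family of dimension at most $1$.
\item Only finitely many of them meet $\Gamma$ in length $2$ (the bisecant ones make up the flopping locus of $\chi$). Each of these has $I_\ell\subsetneq\Sym^4\Gamma$, so together they contribute at most $3$ to $\dim I$.
\item For every other line in $A$, the corrected count gives $\dim I_\ell\le 1$. It uses $4$-secants of $\pi_\ell(\Gamma)$, plus its trisecants when $\ell$ meets $\Gamma$. These lines therefore contribute at most $2$.
\end{itemize}
Hence $\dim I\le 3<4$, and $I\to\Sym^4\Gamma$ is not dominant.

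The paper's own proof also claims $\dim\pi_2^{-1}(\ell)\le 1$ for every $\ell$. It overlooks the bisecant lines in the same way, so this finite exceptional set must be handled there too.

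Finally, your birationality worry is vacuous. $\pi_\ell(\Gamma)$ spans $\PP^3$, so its degree is at least $3$, while $\deg(\pi_\ell|_\Gamma)\cdot\deg\pi_\ell(\Gamma)=5-k\le 5$. This forces $\pi_\ell|_\Gamma$ to be birational onto its image.
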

\begin{proof}
Consider the incidence
\[
\Ii=\{(D,\ell)\in \Sym^4\Gamma\times \bH_1(V_5)\mid \ell\subseteq \Lambda_D\}
\]
together with the two projection maps: $\pi_1: \Ii\to \Sym^4\Gamma$ and $\pi_2: \Ii\to\bH_1(V_5)$.
We claim that $\Ii$ has dimension $3$.
Since $\bH_1(V_5)$ has dimension $2$, it suffices to show that $\pi_2^{-1}(\ell)$ has dimension at most $1$ for all $\ell \in \bH_1(V_5)$.

Fix a line $\ell$ in $V_5$, and consider a linear projection $P_{\ell}: \PP^6\dashrightarrow \PP^4$ from $\ell$.
Then the condition $\ell\subseteq\Lambda_D$ is equivalent to $\dim P_{\ell}(\Lambda_D)=1$, that is, the points $P_{\ell}(x_1)$, $P_{\ell}(x_2)$, $P_{\ell}(x_3)$, $P_{\ell}(x_4)$ are collinear in $\PP^4$.
However, the dimension of the family of $4$-secant lines of the nondegenerate curve $P_{\ell}(\Gamma)$ does not exceed $1$, so the proof is completed.
\end{proof}

\begin{lemma}\label{three_quartics}
Let $p_1$, $p_2$, $p_3$, and $p_4$ be four general points of the curve $\Gamma$ in $V_5$.
Then there exist exactly three rational quartic curves in $V_5$ passing through these four points.
\end{lemma}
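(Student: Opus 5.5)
We will use the description of $\bH_4(V_5)$ recalled in Section~2.1: every rational quartic curve $C\subset V_5$ is the residual curve $\Lambda\cap V_5=l\cup C$ for a unique line $l\subset V_5$ and a $4$-dimensional linear subspace $\Lambda\supseteq l$ in $\PP^6$. Conversely, every pair $(l,\Lambda)$ with $l\subseteq\Lambda$ gives such a residual curve. The point is that $\Lambda$ is then the linear span of $C$, since a rational quartic curve spans a $\PP^4$. Counting quartics through $p_1,\dots,p_4$ therefore amounts to counting pairs $(l,\Lambda)$ with $l\subseteq\Lambda$ and $p_i\in C$ for all $i$.

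\textbf{Step 1.} If $C$ passes through the $p_i$, then $\Lambda\supseteq\Lambda_D$, which has dimension $3$ by generality. By Lemma~\ref{incidence}, $l\not\subseteq\Lambda_D$. Since $\Lambda\supseteq\langle \Lambda_D, l\rangle$, the dimension count forces $\Lambda=\langle\Lambda_D,l\rangle$, so $\Lambda_D\cap l$ is a point. Hence such quartics correspond to lines $l$ meeting $\Lambda_D$, with $C$ recovered as the residual curve of $\langle\Lambda_D,l\rangle\cap V_5$.

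\textbf{Step 2.} Identify the lines meeting $\Lambda_D$. A line $l$ meets the $\PP^3$ $\Lambda_D$ at a point of $\Lambda_D\cap V_5$. For a general $\PP^3$, $\Lambda_D\cap V_5$ is a zero-dimensional scheme of length $5$ (a codimension-$3$ linear section of the threefold $V_5$). Four of these points are $p_1,\dots,p_4$, so there is exactly one residual point $q$; we need to check that $\Lambda_D\cap V_5$ is finite and reduced for general $D$. The candidate lines are the lines through $q$ together with the lines through some $p_i$. By Proposition~\ref{recall}, a general point of $V_5$ lies on exactly three lines. Provided $q\notin B$, this gives three candidates through $q$.

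\textbf{Step 3.} Exclude the lines through $p_i$. If $l\ni p_1$, then $\Lambda=\langle\Lambda_D,l\rangle$ and $\Lambda\cap V_5=l\cup C$ as cycles of degree $5$. We must show $C$ fails to pass through $p_1$, or else that this configuration does not occur for general $D$. Since $l$ meets $\Lambda_D$ only at $p_1$, the points $p_2,p_3,p_4,q$ lie on $C$. The point $p_1$ lies on $C$ only if $p_1\in l\cap C$. That can happen, since $l\cdot C$ is typically $2$ points within $\Lambda$ (arithmetic genus considerations for $l\cup C$ of degree $5$ in $\PP^4$, a linear section of genus $1$). So we need a dimension count: the set of $D$ for which $p_1\in l\cap C$ for some line $l$ through $p_1$ is a proper closed subset. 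This is the \textbf{main obstacle}, and I would handle it with an incidence argument similar to Lemma~\ref{incidence}. Alternatively, the intended count may be that a curve through $q$'s lines yields $C$ containing all $p_i$. Indeed, for $l\ni q$, the points $p_i\notin l$ lie in $\Lambda\cap V_5\setminus l\subseteq C$, so those three quartics work.

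\textbf{Step 4.} Check that the three lines through $q$ give three distinct quartics. This holds by uniqueness of the residual line recalled in Section~2.1. Finally, use generality (an incidence argument in $\Sym^4\Gamma$) to ensure $q\notin B$, which is an open condition. This requires showing that the residual point $q$ moves enough as $D$ varies, i.e.\ that not every such $q$ lands in the divisor $B\in|2H^*|$.
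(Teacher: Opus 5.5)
Your construction of the three quartics is exactly the paper's proof: take the residual point $q$ of $\Lambda_D\cap V_5$ and the three lines through $q$; use Lemma~\ref{incidence} to get $\langle\Lambda_D,\ell\rangle\cong\PP^4$; take the residual quartic; and get distinctness from uniqueness of the residual line. The paper never proves the upper bound in ``exactly three''. Your Steps~1--3 try to, and Steps~1--2 are right (modulo genericity, below). Step~3, however, is left open, and your worry that $p_1\in l\cap C$ ``can happen'' is unfounded under the hypothesis you already impose in Step~2.

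No incidence argument is needed. Let $l\ni p_1$ and $\Lambda=\langle\Lambda_D,l\rangle$. Then $\Lambda_D$ is a hyperplane of $\Lambda$ meeting the Cohen--Macaulay curve $\Lambda\cap V_5=l\cup C$ properly, and $\Lambda_D\cap(l\cup C)=\Lambda_D\cap V_5$. Intersection cycles are additive over components, so
\[
p_1+p_2+p_3+p_4+q=\Lambda_D\cdot(l+C)=p_1+\Lambda_D\cdot C .
\]
The left side uses that $\Lambda_D\cap V_5$ is five reduced points. Hence $\Lambda_D\cdot C=p_2+p_3+p_4+q$, so $p_1\notin C$. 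Equivalently, a hyperplane section reduced at $p_1$ forces $l\cup C$ to be smooth at $p_1$, whereas points of $l\cap C$ are singular. So no quartic through $D$ has its residual line through some $p_i$, and the count is exactly three.

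The genericity inputs also need more care than ``a general $\PP^3$''. $\Lambda_D$ is far from general: $\Gamma$ spans the hyperplane $\langle\Gamma\rangle$ with $\langle\Gamma\rangle\cap V_5=A$. Therefore $\Lambda_D\cap V_5\subseteq A$, and in particular $q\in A$ for every $D$.

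Consequently, finiteness and reducedness of $\Lambda_D\cap V_5$ must be checked within this $4$-dimensional family. Likewise, $q\notin B$ amounts to the residual map $D\mapsto q$ not landing in the curve $A\cap B$. The latter holds for the following reasons:
\begin{itemize}
\item through a general point of $\langle\Gamma\rangle\cong\PP^5$ passes a $2$-dimensional family of the $4$-secant spaces $\Lambda_D$ of the rational normal quintic $\Gamma$;
\item hence the residual map is dominant onto $A$;
\item and $A\not\subseteq B$, since $B$ is irreducible of degree $10$.
\end{itemize}

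The paper also takes all of these genericity statements for granted. So once Step~3 is closed as above, your argument is a more complete version of the paper's.
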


\begin{proof}
Let $\Lambda_D$ be the $3$-dimensional linear subspace of $\PP^6$ spanned by $p_1$, $p_2$, $p_3$, and $p_4$.
Then $\Lambda_D$ intersects $V_5$ in five points.
We denote by $p_5$ the fifth point of $V_5\cap \Lambda_D$ other than $p_1$, $p_2$, $p_3$, and $p_4$, and define the residual map
\begin{equation*}\label{residual_map}
\rho:\Sym^4\Gamma\dashrightarrow V_5;\ p_1+p_2+p_3+p_4\mapsto p_5.
\end{equation*}

Let $\ell$ be a line in $V_5$ passing through the point $p_5$.
By Lemma~\ref{incidence}, $\Lambda_D$ does not contain $\ell$.
Let $\overline{\Lambda}_{D,\ell}=\langle\Lambda_D,\ell\rangle$ be the $4$-dimensional linear subspace of $\PP^6$ spanned by $\Lambda_D$ and $\ell$.
Then the intersection $V_5\cap \overline{\Lambda}_{D,\ell}$ is a reducible curve of degree~$5$ of the form $\ell\cup C$, and hence $C$ is a curve of degree~$4$.
By adjunction, the curve $\ell\cup C$ has arithmetic genus~$1$.
Since $\ell$ is a secant line to $C$, it follows that $C$ is a rational quartic curve.
Note that there are three distinct choices for the line $\ell=\ell_j$.
Consequently, there exist three distinct rational quartic curves $C=C_j$, each associated with a different line $\ell_j$ for $j=1,2,3$.
\end{proof}

\begin{remark}
The result in Lemma~\ref{three_quartics} coincides with the \emph{correlator} computed in Section~2.2 of \cite{BM04}.
\end{remark}

\begin{lemma}\label{quartic_in_dPz}
Let $p_1$, $p_2$, $p_3$, and $p_4$ be four points of the curve $\Gamma$ in general position.
Then there exists a unique rational quartic curve contained in the hyperplane section $A$ (Notation~\ref{defA}) passing through these four points.
\end{lemma}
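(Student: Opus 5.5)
Among the three quartics of Lemma~\ref{three_quartics}, we will single out the one lying in $A$. Recall that $A=g(D^+)$ is the hyperplane section of $V_5$ in $|H^*-\Gamma|$, so $\Gamma\subset A$ and $A=V_5\cap H_A$ for a hyperplane $H_A\subset\PP^6$. Since $p_1,\dots,p_4\in\Gamma\subset A$, the span $\Lambda_D$ lies in $H_A$. Hence the fifth point $p_5=\rho(D)$ of $\Lambda_D\cap V_5$ lies in $V_5\cap H_A=A$. Any quartic $C\subset A$ through $p_1,\dots,p_4$ spans a $\PP^4$ containing $\Lambda_D$. By the description in \cite{CKK25b} and the proof of Lemma~\ref{three_quartics}, this $\PP^4$ is $\overline{\Lambda}_{D,\ell}$ for the residual line $\ell$ of $C$, and $\ell$ passes through $p_5$. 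Thus each quartic through the $p_i$ corresponds to exactly one of the (at most three) lines $\ell_j$ through $p_5$. So it suffices to show that exactly one line $\ell_j$ through $p_5$ gives a quartic $C_j$ contained in $A$.

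\emph{Existence.} First determine when $C_j\subset A$. Since $\overline{\Lambda}_{D,\ell}$ is a $\PP^4$, it is not contained in the hyperplane $H_A$, and $\overline{\Lambda}_{D,\ell}\cap H_A$ is a $\PP^3$ containing $\Lambda_D$, hence equal to $\Lambda_D$. Then $C_j\subset A$ would force $C_j\subset\Lambda_D$, which is impossible since $\Lambda_D\cap V_5$ is finite. So no quartic of the form $C_j$ with the $\PP^4$ spanned over $\Lambda_D$ lies in $A$. The correct setup must be different: the quartic in $A$ must span only a $\PP^3$, namely $\Lambda_D$ itself, or else the $p_i$ must be special. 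The real claim is therefore that some $C\subset A$ through the $p_i$ is degenerate, i.e.\ spans a $\PP^3$.

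Rework the count inside the surface $A$. For general $\Gamma$, $A$ is a (possibly singular) quintic del Pezzo surface in $H_A\cong\PP^5$, and $\Gamma\subset A$ is a quintic rational curve. On a del Pezzo surface of degree $5$ we can write classes via $\mathrm{Bl}_4\PP^2$. A rational quartic in $A$ spanning $\PP^4$ is residual to a line in a hyperplane section, with class $-K_A-e$ for a line $e$. Writing $\Gamma=2h-\sum$ or a similar class and computing $C\cdot\Gamma$, we impose that $C$ contains the four points. The linear system $|-K_A-e|$ of rational quartics is a $\PP^3$ (conics-type net plus one). Passing through four general points of $\Gamma$ then cuts out a single member, provided the restriction $|-K_A-e|\to|\Oo_\Gamma(C\cdot\Gamma)|$ behaves well. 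We must check this for each of the $10$ lines $e$ (or the appropriate line classes), and show that exactly one class gives a curve through the $p_i$. The natural candidate is a class $e$ with $(-K_A-e)\cdot\Gamma=4$, so that the four points exactly determine the member. Since $-K_A\cdot\Gamma=5$, this requires $e\cdot\Gamma=1$. A line meeting $\Gamma$ once gives a quartic $C$ with $C\cdot\Gamma=4$, and a $\PP^3$ of such curves restricts to a $\PP^3$ inside $|\Oo_{\PP^1}(4)|\cong\PP^4$. The $p_i$ need not then lie on a member, so instead we use the $\PP^4$ of curves $|C|$ including reducible ones, and get a unique one by dimension $4=4$.

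The main obstacle is the intersection-theoretic bookkeeping on $A$. We must identify the class of $\Gamma$ on $A$, using that $\Gamma\cdot$(line) counts are determined by the bisecant-line geometry of \cite[Lemma~5.2.5]{KPS18}. We then need the unique class of rational quartics meeting $\Gamma$ in $4$ points, with $h^0=5$ and restriction map to $H^0(\Oo_\Gamma(4))$ an isomorphism. This last point would be verified by showing that the kernel, $|C-\Gamma|$, is empty. For general $D$ the unique member is then irreducible, by a dimension count on the reducible members.
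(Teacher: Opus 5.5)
Your proposal does not reach a proof, and two of its steps are wrong.

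\textbf{First step.} The claim that $\overline{\Lambda}_{D,\ell}$ is not contained in $H_A$ fails in exactly the case that matters. Since $\Lambda_D\subset H_A$, one has $\overline{\Lambda}_{D,\ell}\subset H_A$ if and only if $\ell\subset A$. Corrected, your first paragraph gives a clean criterion: $C_j\subset A$ if and only if $\ell_j\subset A$. So the lemma says that exactly one of the lines through $p_5=\rho(D)$ lies in $A$.

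The pivot to ``degenerate'' quartics is a dead end. A quartic spanning only $\Lambda_D$ would lie in the finite set $\Lambda_D\cap V_5$.

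\textbf{Second step.} On a quintic del Pezzo surface, a class $Q$ with $p_a(Q)=0$ and $-K_A\cdot Q=4$ has $Q^2=2$. Hence $Q=-K_A-e$ for a line $e$, and $h^0(Q)=1+\tfrac12(Q^2-K_A\cdot Q)=4$. Reducible members already lie in this $\PP^3$, so there is no ``$\PP^4$ of curves $|C|$''. The requirement $h^0=5$ in your last paragraph therefore cannot be met on a normal $A$.

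Your intermediate observation is the correct one, and it is decisive. For $e\cdot\Gamma=1$ the restriction $|{-K_A}-e|\to|\Oo_\Gamma(4)|\cong\PP^4$ is injective, since its kernel is $|{-K_A}-e-\Gamma|=\emptyset$. So its image is a hyperplane. For the remaining lines $C\cdot\Gamma\in\{3,5\}$, and a similar count excludes a general $D$. Hence for normal $A$ a general $D$ lies on no rational quartic of $A$ at all.

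This agrees with the corrected criterion. The map $\rho$ is dominant onto $A$, because a general point of $H_A\cong\PP^5$ lies on a $2$-dimensional family of $4$-secant $3$-planes of the rational normal quintic $\Gamma$. A normal $A$, on the other hand, contains only finitely many lines.

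\textbf{What is missing, and how the paper argues.} The paper splits into cases according to Prokhorov's classification of hyperplane sections of $V_5$, and you are missing this case distinction.

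If $A$ is non-normal, its normalization is $\FF_3$ or $\FF_1$, with $\overline{\Gamma}\in|s+4f|$, resp.\ $|s+3f|$. The quartics lie in $|s+3f|$, resp.\ $|s+2f|$. These systems are $4$-dimensional and restrict isomorphically onto $|\Oo_{\overline{\Gamma}}(4)|$, since the kernel is $H^0(\Oo_S(-f))=0$. This is exactly the situation you were looking for ($h^0=5$, restriction an isomorphism). It exists because $A$ is swept out by lines: the quartic is residual to the ruling line through $p_5$, consistent with the corrected criterion.

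For normal $A$, the paper asserts that the points determine a unique plane cubic with a node at $q_1$ through $q_2,q_3,q_4$ and $\psi(x_1),\dots,\psi(x_4)\in\Gamma_0$. But these cubics form exactly $|{-K_S}-e_1|\cong\PP^3$; note $\Gamma=2h-e_1$ and $(3h-2e_1-e_2-e_3-e_4)\cdot\Gamma=4$. Your hyperplane obstruction applies verbatim. These are ten linear conditions on plane cubics, independent for general $x_i$, so no such cubic exists.

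So in the normal case neither your argument nor the paper's goes through. As far as I can see, the statement holds only when $A$ is non-normal, and you should not try to rescue the normal case by a dimension count.
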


\begin{proof}
We prove the lemma case by case according to the classification of the hyperplane section $A$ of $V_5$ given in \cite[Section 2]{Pro92}.

Case 1.
$A$ is non-normal, and its normalization $\phi\colon S\to A$ is isomorphic to the third Hirzebruch surface $\FF_3$.
Let $s$ and $f$ denote the exceptional section and a fiber, respectively, and set $\overline{\Gamma}=\phi^{-1}(\Gamma)$.
Then $\overline{\Gamma}\in |s+4f|$ and $C\in|s+3f|$.
Note that $\dim|s+3f|=\dim\PP H^0(\PP^1,\Oo_{\PP^1}\oplus\Oo_{\PP^1}(3))=4$, hence four general points determine a unique member.

Case 2.
$A$ is non-normal, and its normalization $\phi\colon S\to A$ is isomorphic to the first Hirzebruch surface $\FF_1$.
Let $s$ and $f$ denote the exceptional section and a fiber, respectively, and set $\overline{\Gamma}=\phi^{-1}(\Gamma)$.
Then $\overline{\Gamma}\in |s+3f|$ and $C\in|s+2f|$.
Note that $\dim|s+2f|=\dim\PP H^0(\PP^1,\Oo_{\PP^1}(1)\oplus\Oo_{\PP^1}(2))=4$, hence four general points determine a unique member.

Case 3.
$A$ is normal, and its desingularization $\phi: S\to A$ is isomorphic to a del Pezzo surface of degree~$5$.
Let $\overline{\Gamma}=\phi^{-1}(\Gamma)$.
By identifying $S$ with the blow-up $\psi:S\to\mathbb{P}^2$ of four points $q_1,\ldots,q_4\in\mathbb{P}^2$, we may assume that $\Gamma_0:=\psi(\overline{\Gamma})$ is a smooth conic on $\PP^2$.
Since $-K_S\cdot\overline{\Gamma}=5$, the curve $\Gamma_0$ passes through exactly one of $q_1,\,q_2,\,q_3,\,q_4$.
After relabeling, we may assume that $q_1\in\Gamma_0$ and $q_2,q_3,q_4\notin\Gamma_0$.

In addition, the existence of a curve $C$ on $S$ with $-K_S\cdot C=4$ and $x_1,\ldots,x_4\in C$ is equivalent to the existence of a nodal cubic $C_0\subset \mathbb{P}^2$ passing through $q_1,\ldots,q_4$ and $\psi(x_1),\ldots,\psi(x_4)$, with node at $q_1$, and $q_1,\psi(x_1),\ldots,\psi(x_4)$ lie on the same conic $\Gamma_0$.
For general choices of the points, they determine a unique plane cubic $C_0$.
\end{proof}

\subsection{Proof of Theorem~\ref{mainthm}}
Let $V_{22}$ be a prime Fano threefold of degree $22$.
Assume that the Hilbert scheme $\bH_4(V_{22})$ is irreducible.
\begin{theorem}\label{double_cover}
Under the above assumption, the Hilbert scheme $\bH_4(V_{22})$ is birational to a double cover of $\PP^4(\cong\Sym^4\Gamma)$ branched over a hypersurface.
\end{theorem}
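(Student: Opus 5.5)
We will define $\Phi$ on a dense open subset and then count its fibres. Let $C\subset V_{22}$ be a general rational quartic curve. It is disjoint from $L$ and from the flopping locus of $\chi$, since lines meeting $L$ form a one-dimensional family covering a surface, and a general quartic avoids both. By the degree computation at the end of Section~\ref{susar}, the strict transform $\overline{C}\subset V_5$ is a rational quartic meeting $\Gamma$ in a divisor $D=\overline{C}\cap\Gamma$ of degree $4$. We set $\Phi(C)=D\in\Sym^4\Gamma\cong\PP^4$. This is a rational map $\bH_4(V_{22})\dashrightarrow\PP^4$, and both sides have dimension $4$ because $-K_{V_{22}}\cdot C=4$ and $\bH_4(V_{22})$ is irreducible.

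To find the fibre over a general $D$, first construct the three rational quartics $C_1,C_2,C_3\subset V_5$ through $p_1,\dots,p_4$ of Lemma~\ref{three_quartics}. They are cut out by $\langle\Lambda_D,\ell_j\rangle$, where the $\ell_j$ are the three lines through the residual point $p_5=\rho(D)$. Next, Lemma~\ref{quartic_in_dPz} gives a unique rational quartic through $D$ lying in $A$. I will show it is one of the $C_j$. Since $A$ is a hyperplane section containing $\Gamma$, we have $\Lambda_D\subset\langle A\rangle$ and hence $p_5\in A$. The curve $\overline{\Lambda}_{D,\ell_j}\cap V_5$ lies in $A$ exactly when $\ell_j\subset\langle A\rangle$. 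So $\ell_j\subset A$ for at least one $j$, and I expect exactly one such $j$, because a general $p_5\in A$ lies on exactly one line contained in $A$. The quartic coming from that $j$ lies in $A=g(D^+)$, which is contracted by the link onto $L$, so it does not come from a quartic in $V_{22}$. The other two quartics are not contained in $A$, so they avoid the indeterminacy in codimension one. For general $D$ they also avoid the bisecant lines of $\Gamma$, and their strict transforms are rational quartics $C$ in $V_{22}$ with $\Phi(C)=D$, by running the intersection computation backwards using $\chi(H)=3H^+-2E^+$. Every fibre point arises this way, since $\overline{C}$ is a rational quartic through $D$ and the strict transform is injective off the exceptional loci. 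Hence $\Phi$ is generically $2$-to-$1$, and $\bH_4(V_{22})$ is birational to the normalization of $\PP^4$ in the degree-$2$ function field extension, which is a double cover.

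For the branch locus, the two preimages coincide exactly when the two lines $\ell_j$ not in $A$ coincide. By Proposition~\ref{recall}, this happens when $p_5=\rho(D)\in B$, away from the extra coincidences with the line in $A$. So the branch divisor is contained in $\rho^{-1}(B)$, the pullback of the divisor $B\in|2H^*|$ under the residual map $\rho:\Sym^4\Gamma\dashrightarrow V_5$. Since $\rho$ is dominant onto $A$ rather than onto $V_5$, the relevant locus is $\rho^{-1}(B\cap A)$, which is a hypersurface in $\PP^4$ provided $\rho$ is dominant onto $A$ and $B\not\supset A$. I would check dominance by a dimension count: the fibres of $\rho$ over $A$ are one-dimensional, because $\rho^{-1}(p)$ corresponds to the hyperplanes through $p$ containing $\Gamma$'s span data.

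The main obstacle is the analysis of the exceptional locus of $\rho$ together with the claim that exactly one of the three lines through $p_5$ lies in $A$ for general $D$. I would verify the latter case by case, using the same classification of $A$ from \cite{Pro92} as in Lemma~\ref{quartic_in_dPz}. I would then confirm that the branch locus is a genuine hypersurface, not empty, by exhibiting a $D$ with $\rho(D)\in B\cap A$ in the normal del Pezzo case.
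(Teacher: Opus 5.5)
Your outline follows the paper's argument: three quartics through $D$ by Lemma~\ref{three_quartics}, discard the one lying in $A$ via Lemma~\ref{quartic_in_dPz}, and lift the other two. Your observation that $\Lambda_D\subset\langle A\rangle=\langle\Gamma\rangle$, so that $p_5=\rho(D)$ always lies in $A$, is correct, and the paper never makes it explicit. But the step you single out as the main obstacle genuinely fails. The claim that a general point of $A$ lies on exactly one line contained in $A$ holds only when $A$ is one of the non-normal scrolls of Cases~1--2, whose rulings are lines. In Case~3, $A$ is a normal quintic del Pezzo surface, which contains only finitely many lines. For each such line $e$, the locus $\{D:\Lambda_D\cap e\neq\emptyset\}$ is a proper closed subset of $\Sym^4\Gamma$ (a hyperplane if $e\cdot\Gamma=1$). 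So for general $D$ none of $\ell_1,\ell_2,\ell_3$ lies in $A$, and none of $C_1,C_2,C_3$ does. Your inference ``so $\ell_j\subset A$ for at least one $j$'', which rests on Lemma~\ref{quartic_in_dPz}, therefore cannot hold in Case~3: a quartic in $A$ through $D$ would have a residual line lying in $A$ and passing through $p_5$. In fact the count in that case of the lemma is off by one. Cubics with a node at $q_1$ through $q_2,q_3,q_4$ form only a $\PP^3$, so four further points on $\Gamma_0$ overdetermine them. Equivalently, each class $-K_S-e$ of rational quartics on $S$ has $h^0=4$ and restricts injectively to $\overline{\Gamma}$, so its traces fill only a hyperplane of $\Sym^4\Gamma$.

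So your planned case-by-case verification breaks down in Case~3, and that is the generic case. For a general line $L$, $D^+$ is $D'\cong\FF_1$ blown up at the three points where the flopping curves meet it; compare $(D')^3=1$ with $(D^+)^3=(H^+-E^+)^3=-2$. Then $A$ is the anticanonical model of $D^+$. In this case the three curves $C_j$ meet $A$ only at $p_1,\dots,p_4$, hence avoid $D^+$ and the bisecant lines of $\Gamma$. By your own intersection computation all three then lift to rational quartics in $V_{22}$, so the same bookkeeping gives a generically $3$-to-$1$ map. As it stands, the argument (yours, and the paper's through Lemma~\ref{quartic_in_dPz}) yields a double cover only when $A$ is non-normal. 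To get the stated result in general you would need a different reason for one of the three quartics to drop out. A smaller slip: since $\rho$ maps the fourfold $\Sym^4\Gamma$ onto the surface $A$, its fibres are two-dimensional, not one-dimensional.
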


\begin{proof}
We denote by $\bH_4(V_{5},\Gamma)$ the Hilbert scheme of rational quartic curves $C$ in $V_{5}$ such that the $0$-dimensional subscheme $C\cap \Gamma$ has length $4$.
By Lemma~\ref{three_quartics}, there exists a generically $3$-to-$1$ rational map
\[
\bH_4(V_{5},\Gamma)
\dashrightarrow
\Sym^4\Gamma\cong \PP^4.
\]
Let $[C]\in \bH_4(V_{5},\Gamma)$, and assume that $C$ admits a strict transform $\overline{C}$ on $V_{22}$ under the Sarkisov link.
Then
\[
\overline{C}.H
=C.(3H^*-2\Gamma)
=3\cdot4-2\cdot4
=4,
\]
so $\overline{C}$ is a rational quartic curve in $V_{22}$.
Note that $C$ admits a strict transform on $V_{22}$ if and only if $C\not\subseteq A$.
Let us define
\[
\mathbf{S}_A
=\{[C]\in \bH_4(V_5,\Gamma)\mid C\subseteq A\},
\]
which parametrizes the rational quartic curves $C$ in $V_5$ such that $C\cap \Gamma$ is a $0$-dimensional subscheme of length $4$ and the curve $C$ is contracted by the Sarkisov link to $V_{22}$.
Then $\mathbf{S}_A$ maps birationally onto $\Sym^4\Gamma$ by Lemma~\ref{quartic_in_dPz}.
Consequently, the map
\[
\bH_4(V_{22})
\dashrightarrow
\bH_4(V_5,\Gamma)\setminus \mathbf{S}_A
\]
is birational, and hence we obtain a generically $2$-to-$1$ rational map
\[
\Phi:
\bH_4(V_{22})
\dashrightarrow
\Sym^4\Gamma\cong \PP^4.
\]

We now describe the branch locus of $\Phi$.
Recall that exactly three lines pass through a general point $x\in V_5$.
Let $x_1$, $x_2$, $x_3$, and $x_4$ be four general points of $\Gamma$, and let $\ell_1$, $\ell_2$, and $\ell_3$ be the lines passing through the residual point $x_5=\rho(x_1+x_2+x_3+x_4)\in V_5$.
As in the proof of Lemma~\ref{three_quartics}, define
\[
\overline{\Lambda}_i
=\langle x_1,x_2,x_3,x_4,\ell_i\rangle \subseteq \PP^6.
\]
Then the residual curves $C_i$ to $\ell_i$ in $\overline{\Lambda}_i\cap V_5$ are precisely the preimages of the map $\Phi$ over the point $x_1+x_2+x_3+x_4\in \Sym^4\Gamma$.
Also, the map $\Phi$ branches precisely when two of the three associated quartic curves $C_i$ coincide.
This occurs exactly when two of the lines among $\ell_1$, $\ell_2$, and $\ell_3$ coincide and the remaining line is contained in $A$.
Equivalently, after relabeling,
\[
\ell_1=\ell_2
\qquad\text{and}\qquad
\ell_3\subseteq A.
\]
Recall that the locus $B\subset V_5$ consisting of points through which two of the three lines coincide is a divisor in the linear system $|\Oo_{V_5}(2)|$ (see Proposition~\ref{recall}).
Therefore, the inverse image $\rho^{-1}(B)$ contains the branch divisor of $\Phi$. Since $\Phi$ is generically two-to-one, this branch divisor is a hypersurface.
\end{proof}

\bibliographystyle{alpha}
\bibliography{Library.bib}

\end{document}